%%%%%%%%%%%%%%%%%%%%%%%%%%%%%%%%%%%%%%
%Skyrmion paper by LNG, V. Stavrou, D.Kourounis, I. Panagiotopoulos
%March 2018
%%%%%%%%%%%%%%%%%%%%%%%%%%%%%%%%%%%%%%
\pdfoutput=1

\documentclass[a4paper,10pt,twocolumn]{article}
\usepackage[T1]{fontenc}
\usepackage{lmodern}
\usepackage{lipsum}
%------
%\usepackage{amssymb,amsthm}
%\usepackage{graphicx}
  % to keep listings happy
\usepackage{caption}
\usepackage{authblk}
\usepackage[top=2cm, bottom=2cm, left=2cm, right=2cm]{geometry}
\usepackage{fancyhdr}
\usepackage{url}
\pagestyle{fancy}
%-------
\usepackage{graphicx}
\usepackage{amssymb, gensymb}
\usepackage{amsmath}
\usepackage[authormarkuptext=id]{changes}
%\definechangesauthor[name={Drosos}, color=red]{DK}
\usepackage{soul}
\usepackage{tikz}
%\usepackage{pgfplots}
%\pgfplotsset{compat=1.8}
% \pgfplotsset{compat=newest}
\usetikzlibrary{patterns}
\usepackage{pgfplotstable, booktabs}  % booktabs for \toprule etc
\usepackage{lineno}
%\linenumbers
\usepackage{array}
\usepackage{amsthm}
\definecolor{mycolor1}{RGB}{130,220,202}
\definecolor{mycolor2}{RGB}{79,122,142}  %{92,78,49}
\definecolor{mycolor3}{RGB}{170,35,3}
\definecolor{mycolor4}{RGB}{207,170,114}
%\definecolor{mycolor4}{RGB}{79,122,142}  %{92,78,49}
\definecolor{mycolor5}{RGB}{80,135,63}
\definecolor{mycolor6}{RGB}{255,140,190}

\begin{document}
 \title{\sf Second order formulation of boundary value problems in gradient elasticity.} 
 \author[1]{Antonios Charalambopoulos\thanks{acharala@math.ntua.gr} } 
 \author[1]{Evanthia Douka}
 \author[2]{Stelios Mavratzas} 
 
%\author[Uoi]{Leonidas N. Gergidis$^*$}
%\ead{lgergidis@gmail.com}
%\author[Uoi]{Vasileios D. Stavrou}
%\author[USI]{Drosos Kourounis}
%\author[Uoi]{Ioannis Panagiotopoulos}
%\address[Uoi]{Department of Materials Science and Engineering, University of Ioannina, %45110 Ioannina, Greece}
%\address[USI]{Advanced Computing Laboratory, Institute of Computational Science, %Universit\`a della Svizzera italiana, Lugano 6904, Switzerland}
\affil[1]{School of Applied Mathematical and Physical Sciences, National Technical University of Athens, GR 15780 Athens, Greece}
\affil[2]{Department of Informatics and Computer Technology, Technological Educational Institution of Western Macedonia, Kastoria 52100, Greece}

\maketitle
\thispagestyle{empty}
\abstract{
A new formulation of boundary value problems in gradient elasticity is presented in this work. The main outcome is the construction of partial differential systems of second order, which are typically equivalent with the well known fourth order equation of gradient elasticity. Two alternative methodologies are developed and presented in the present work. The first approach is based on the framework of the pseudo-differential calculus and exploits the special characteristics of this approach to deal with the non local behavior of gradient elasticity. The second implementation is purely differential and is based on the augmentation of the independent variables of the problem. Under this concept, the constitutive equations of gradient elasticity become part of the differential system itself and the whole framework is reminiscent of the transformation of the wave equation to a first order differential system. Crucial issues like existence, uniqueness and stability of the corresponding initial boundary value problems are also encountered via the new concepts.}
%\begin{keyword} Gradient Elasticity, Second order Boundary Value problems, Pseudo-Differential Calculus, Perturbation theory, Parameter Stability, Weak formulation, Sesquilinear forms
%\end{keyword}

%\end{frontmatter}
%
\date{}
\section{Introduction}
\label{sec:intro}
Classical theory of linear elasticity is proved inadequate to describe size
and microstructural effects or to represent fields characterized by very high gradients of strains. 
This is merely assigned to the absence of internal parameters in the settlement of classical elasticity theory. Granular
materials, polymers, liquid crystals, porous media, solids with micro cracks, dislocations
and disclinations, composites are characteristic cases in which the classical elasticity theory fails in providing satisfactory modeling. 
This drawback of the classical theory has been confronted by other enhanced elastic theories where internal length-scale
parameters correlating the microstructure with the macrostructure are involved in the constitutive
equations of the considered elastic continuum. 
%Thus to remedy the non-locality of the involved elastic fields, several approaches have been
%followed modeling the microstructural effects in a macroscopic manner by introducing higher order
%strain gradient, micropolar and couple-stress theories. 
We mention here indicatively some primitive scientific boosts in literature as Cosserat elasticity theory \cite{Cosserat:1909},  
couple stresses theory \cite{Mindlin:1962}, \cite{Koitier:1964}, \cite{Toupin:1964}, multipolar theory of continuum mechanics
\cite{Green:1964}, higher-order strain-gradient elastic theory \cite{Mindlin:1964}, \cite{Mindlin:1965}, micromorphic, microstretch and micropolar
elastic theories \cite{Eringen:1999} and non-local elasticity \cite{Eringen:1992}, \cite{maugin:2006}.

In the majority of the works evoking the higher-order strain-gradient elastic theory, the stored dynamic energy of the elastic medium is not only produced by the mechanical work appeared when strains coexist with stresses but also additional storage of energy is produced via the interference of the gradient of the strains with the underlying double stresses \cite{Mindlin:1964}. In time dependent processes the inertia term is also influenced by the microstructure \cite{Mindlin:1964} and this influence leads to mixed type space-time derivatives in the differential law of the subsequent processes. For time harmonic problems and especially in scattering processes, the emergence of additional waves with dissipative characteristics - reflecting the internal multiple scattering phenomena - is indicative for the existence of the inner structure while the wave propagation of these waves inside waveguides with application interest \cite{acharala:2008}, \cite{acharala:2012} is illustrative for the coexistence of the multi scaling characteristic dimensions of the investigated media. The static and dynamic models describing the behavior of gradient elastic materials focus mainly on the determination of the displacement elastic field and this formulation is accompanied with boundary (initial) value problems of fourth order. This necessitates the implication of partial differential equations of higher order and several attempts have been performed to establish all the necessary analytical and numerical structural elements that build the appropriate framework for this effort \cite{acharala:2010}, \cite{acharalaCMES:2010}, \cite{Tang:2003}. 

In the present work we are interested in investigating the possibility of modeling the problems of gradient elasticity by formulations based on second order boundary value problems. This effort is justified by the willingness to create a framework in which all the well established traditional accomplishments, concerning second order equations, could be appropriately exploited to the gradient elasticity problems. As an example, the finite element calculus for higher order differential operators is demanding, time consuming and restrictive pertaining to the smoothness of the test functions across the interfaces of the elements, while the relevant realm referring to second order schemes is flexible, well studied and adaptable easily to a wide area of applications. Evidently, replacing a fourth order system by some new model of second order is not expected to be, at least by principle, a "bijective" transformation. In rough terms, we expect the original problem to be equivalent to a two stage scheme of two consecutive interrelated second order problems or alternatively to be equivalent with a second order scheme with more independent variables. This intuitive concept has been implemented in the present work giving birth to two separate alternative formulations of static boundary value problems in gradient elasticity. More precisely in Section \ref{sec:pereliminaries} we give the preliminary concepts concerning boundary value problems of gradient elasticity for the general case admitting also time dynamic phenomena. The first approach between the aforementioned constructed methods is presented in Section \ref{pseudo}. This method implies the involvement of the Poisson's and Green's operators and divides the original problem into two subsequent second order problems. However if the boundary conditions of the original problem, as these emerge on the basis of the relevant calculus of variations, are respected, then the boundary problem of gradient elasticity is transformed to a second order problem whose boundary condition is expressed via a pseudo-differential operator of order zero. This operator is the superposition of the classical trace operator and a pseudo-differential operator of order zero, which constitutes a non local operator reminiscent of the operators met in non local theories \cite{maugin:2006} with the essential difference that non locality is present in the boundary condition instead of the differential law. The developed methodology permits regularly the settlement of stability and convergence arguments characterizing the transmission of the original boundary value problem as the gradient parameters fade away and the gradient elasticity regime gives place to classical elasticity framework. The second approach is totally different and is based on the augmentation of the independent variables of the problem. The whole approach is presented in Section \ref{alter} and is based on the idea to reconsider the gradient of the strains as independent variables while their connection with the displacement field is incorporated in the partial differential equations of the problem substituting then the relevant constitutive equations. Between all the possible second order realizations of the problem only the one leading to a bounded below sesquilinear form is selected, in order to adapt the theory in the well established second order weak formulation. Existence and uniqueness of the solution in accordance with identification with the solution of the original problem are well established.

\section{The fundamental notions of gradient elasticity}
\label{sec:pereliminaries}

%Let us summarize some fundamental and useful concepts related to kinematics of elastic bodies with microstructure, selecting as guide reference the %famous work of Mindlin [\cite{Mindlin:1964}].
Let us consider a three-dimensional linear, gradient elastic body, in which we pay attention on a material volume $V$ confined by
a surface $S$, which is geometrically characterized by its normal vector 
$\widehat{{n}}$, which for simplicity is taken to be a continuous
vector field, fact reflecting the smoothness of the boundary $S$. The strain field $\epsilon_{ij},\ \ i,j=1,2,3$ does not contain enough information for the behavior of the material under stimulation and must be complemented with the gradient of the micro-deformation $\kappa_{ijk}$ and the relative deformation $\gamma_{ij}$ in order to form a set of adequate independent variables of the problem \cite{Mindlin:1964}. 

The potential energy per unit-macrovolume can be considered as a function of these variables
\begin{equation}
W=W(\epsilon_{ij},\kappa_{ijk},\gamma_{ij}).
\end{equation}%
The Mindlin form II assumes vanishing of the relative deformation and invokes so the gradient of the symmetric strain  ${\hat{\kappa}}_{ijk}={\partial}_{i} \epsilon_{jk}$ leading to a simpler representation of $W$:
\begin{equation}
W=W(\epsilon_{ij},{{\hat{\kappa}}_{ijk}})
\end{equation}%
Introducing the vector displacement field $u$, the strain field can be expressed in dyadic form as the strain elastic tensor
%Variation calculus in Gradient elasticity
\begin{equation} 
\widetilde{e}=\frac{1}{2}\left( \nabla u+\left( \nabla u\right)
^{21}\right).
\end{equation}
The potential energy confined in the region $V$ is expressed as the integral 
\begin{equation}
U_{V}=\int_{V}\left[ \widetilde{{\tau} }\,: \, \widetilde{{e}}\mathbf{+}{{\left( 
\widetilde{{\mu} }\right) ^{321}\, \vdots \,\nabla \widetilde{{e}}}}\right] d\mathbf{r,} 
\end{equation}
where we recognize the contraction of the independent variables with the dual tensors. More precisely we encounter the Cauchy stress tensor 
\begin{eqnarray}
&&\widetilde{{\tau} } \left. \mathbf{=}
\right. 2\mu \widetilde{e}+\lambda 
\widetilde{{I}}\left( \nabla \cdot u\right)  \notag \\
&&\left. =\right. \mu 
\left( \nabla u+\left( \nabla u%
\right) ^{21}\right) +\lambda \widetilde{I}\left( \nabla \cdot u%
\right) 
\end{eqnarray}
and the double stress tensor \cite{Mindlin:1964} ${\widetilde{{\mu}}=\mu _{ijk}\widehat{{x}}_{i}
\widehat{{x}}_{j} \widehat{{x}}_{k}}$. These tensors share symmetry properties as follows 
% \item Τανυστής τάσεων (Cauchy stress triadic) $\widetilde{\boldsymbol{\tau} }$  (Ο δυαδικός τανυστής της ανηγμένης παραμόρφωσης στο ενεργειακό συναρτησιακό)
%\pause
%  \item[] 
\begin{equation}
\widetilde{{\tau} }=\widetilde{{\tau} }^{21},\;\widetilde{{\mu}}\mathbf{=}%
\widetilde{{\mu} }^{132}
\end{equation}%
The double stress tensor $\widetilde{{\mu}}$ is given by 
\begin{eqnarray}
\widetilde{{\mu} } &=&\frac{1}{2} {a_1}[
(\widetilde{I}\Delta u)^{312}+
\widetilde{{I}}\nabla \nabla \cdot u+
(\widetilde{{I}}\nabla \nabla \cdot u)^{312}
\notag \\
&+&
(\widetilde{{I}}\nabla \nabla \cdot u)^{132}]+
2{a_2}(\widetilde{{I}}\nabla \nabla \cdot u)^{312}
\notag \\
&+&
\frac{1}{2}{a_3}[
\widetilde{{I}}\Delta u+
\widetilde{{I}}\nabla \nabla \cdot u +
(\widetilde{{I}}\Delta u)^{132}+
(\widetilde{{I}}\nabla \nabla \cdot u)^{132}]
\notag \\
&+&
{a_4}[\nabla \nabla u+(\nabla \nabla u)^{231}]
\notag \\
&+&
\frac{1}{2}{a_5}
[
2(\nabla \nabla u)^{312}+
\nabla \nabla u+
(\nabla \nabla u)^{231}
] \label{mujune}
\end{eqnarray}
where ${a_i}, i=1,..5$ are the gradient constitutive parameters. 
We are in position to write Eq.(\ref{mujune}) in the condensed form
\begin{eqnarray} 
\widetilde{{\mu}} 
%&=& {\mathcal{H}} \ \vdots \left( \widetilde{e} \stackrel{\leftarrow}{\nabla} \right) 
= {\mathcal{H}} \ \vdots \ \nabla \widetilde{e} \label{represh}
\end{eqnarray}
on the basis of a well defined polyadic ${\mathcal{H}}$ of sixth order, satisfying the symmetry relations
\begin{eqnarray} 
{\mathcal{H}}={{{\mathcal{H}}}}^{132456}={{{\mathcal{H}}}}^{123546}={{{\mathcal{H}}}}^{654321} 
\end{eqnarray}  
and whose specific form is omitted here for the sake of brevity. Apart from symmetry the tensor ${\mathcal{H}}$ merits positiveness in the sense that  
\begin{eqnarray} 
{(\nabla \widetilde{e})}^{321} \ \ \vdots {\mathcal{H}} \ \ \vdots \ \nabla \widetilde{e} \geq c a {|\nabla \widetilde{e}|}^{2} = c a {(\nabla \widetilde{e})}^{321} \ \vdots \ \nabla \widetilde{e} \label{ante}
\end{eqnarray}  
where $c$ is a constant independent of the gradient parameters and $a$ is the minimum of the non zero parameters $a_{i},\ \ i=1,2,...5$, provided that $a_{4}, a_{5}$ do not vanish simultaneously.  

Applying Hamilton's principle, we obtain
\begin{equation}
\delta \int_{t_{0}}^{t_{1}}\left( K_{V}-U_{V}\right) dt+\int_{t_{0}}^{t_{1}} \delta
W_{V} dt=0
\end{equation}
where $K_{V}$ stands for the kinetic energy in $V$ , while $ W_{V}$ is the work produced by external forces applied on the region $V$.  
%\begin{equation*}
%\delta W=\int_{V}\widetilde{f}:\delta u^{T}d\mathbf{r}+\int_{S}%
%\widetilde{R}:\widehat{\mathbf{n}}\cdot \nabla \left( \delta u%
%^{T}\right) dS+\int_{S}\widetilde{P}:\delta u^{T}dS
%\end{equation*}%

Applying calculus of variation, we are in position to determine the differential law and the boundary conditions of our problem. First we state the induced PDE:
\begin{eqnarray}
\Delta^{\ast} \cdot u -\nabla \nabla\ : {\widetilde{{\mu}}}&=& -f+\rho \frac{\partial ^{2}}{\partial t^{2}}\left[ u-{h_{1}^{2}\nabla \nabla}
\cdot u \right. \nonumber \\ && \left. +{h_{2}^{2}\nabla \times \nabla} \times u\right], \label{basceq}
\end{eqnarray}
which is the well known equation of gradient elasticity. Using Eq.(\ref{mujune}) we express everything on the basis of the displacement field and acquire the fourth order differential equation of gradient elasticity 
\begin{eqnarray}
&&\left( \lambda +2\mu \right) \left( 1-{\xi _{1}^{2}\Delta} \right) \nabla
\nabla \cdot u \nonumber \\
&&-\mu \left( 1-{\xi _{2}^{2}\Delta} \right) \nabla
\times \nabla \times u\left. =\right.   \notag \\
&& -f+\rho \frac{\partial ^{2}}{\partial t^{2}}\left[ u-{h_{1}^{2}\nabla \nabla}
\cdot u+{h_{2}^{2}\nabla \times \nabla} \times u\right]
\end{eqnarray}
where the four parameters $\xi _{1}$, $\xi _{2}$, $h_{1}$ and $h_{2}$ are connected with the aforementioned constitutive parameters $a_{i}$ and reflect the influence of the microstructure. The parameters $\lambda, \mu$ are the Lam$\acute{\text{e}}$'s constants (slightly perturbed due to the microstructure). 
In the absence of microstructure, ${\xi}_{1}={\xi}_{2}=h_{1}=h_{2}=0$ and we recover the classical elasticity differential law.
%\begin{eqnarray}
%\Delta^{\ast} \cdot u = \left( \lambda +2\mu \right)  \nabla
%\nabla \cdot u-\mu \nabla
%\times \nabla \times u= -f+\rho \frac{\partial ^{2}}{\partial t^{2}} u
%\end{eqnarray}
The second product of the calculus of variations are the boundary conditions, which are divided in two sets:

a) The classical conditions: \newline 
i) Continuity of displacements  $u(\mathbf{r})$ and (or) \newline  
ii) Continuity of surface tractions ${P}$
\begin{eqnarray}
&&{P}\left( \mathbf{r}\right) \left. =\right. \widehat{{n}}%
\cdot \widetilde{{\tau} }\left( \mathbf{r}\right) {\mathbf{-}\widehat{{n}%
}\otimes \widehat{{n}}\left. :\right. \frac{\partial \widetilde
{{\mu} }%
}{\partial n}\left( \mathbf{r}\right) -\widehat{{n}}\cdot \left(
\nabla _{S}\cdot \widetilde{{\mu} }\left( \mathbf{r}\right) \right)} 
\notag \\
&&
-\widehat{{n}}\cdot \left( \nabla _{S}\cdot \widetilde{{\mu} }^{213}\left( 
\mathbf{r}\right) \right)   
\notag \\
&&
+\left[ \left( \nabla _{S}\cdot \widehat{{n}}\right) \widehat{%
{n}}\otimes \widehat{{n}}{-}\left( \nabla _{S}\widehat{%
{n}}\right) \right] \left. :\right. 
\widetilde{{\mu} }\left( \mathbf{r}%
\right)  \notag \\
&&+\widehat{{n}}\cdot \rho ^{\prime }\widetilde{D}\left. :\right. 
\frac{\partial ^{2}}{\partial t^{2}}\left( \widehat{{n}}\frac{%
\partial }{\partial n}u\left( \mathbf{r}\right) +\nabla _{S}%
u\left( \mathbf{r}\right) \right) \left. \right. ,\,\, \left. \right. \label{tractionjune}
\end{eqnarray}
where $\rho ^{\prime }$ stands for the mass of micro-material per unit macro-volume, $\widetilde{D}$ is a specific tensor
of fourth order depending on the physical and geometrical parameters of the microstructure and $\nabla _{S}=\nabla-\widehat{{n}} \frac{\partial}{\partial n}$ is the surface gradient. 
 
b) The non classical conditions: \newline
i) Continuity of surface double stresses ${\widetilde{R}\left( \mathbf{r}\right) \left. = \right. \widehat{{n}}
\cdot \widetilde{{\mu} } \left( \mathbf{r}\right) \cdot \widehat{{
n}}}$ and (or) \newline  
ii) Continuity of normal derivatives of the displacements $\frac{\partial u}{\partial n} \left( \mathbf{r} \right).$  
\newline
It is clear that even the classical boundary conditions are influenced by the presence of the microstructure, in the sense that the surface traction is "polluted" drastically by several extra terms (see Eq.(\ref{tractionjune})), owing their existence to the double stress tensor. Thus, applying macroscopically a specific mechanical load on the surface of a medium with microstructure, this load is absorbed and compensated by the structure after being suitably distributed to a classical Cauchy type surface traction along with additional traction terms incorporating the gradient type dynamic response of the material.

\section{The pseudo - differential method} 
\label{pseudo}
The framework of the pseudo-differential calculus is powerful but extended, complicated and very tough to be exposed extensively. This would be very disorientating for the purposes of the present work. Thus the structure of this section is somehow reversed. We begin with the original problem under investigation, we apply general results from the theory of pseudo-differential operators, wherever necessary, but focus mainly on the gradient elasticity problem itself. Just before the theorem stated at the end of the section, we present some necessary theoretical issues from this calculus that are indispensable for the acquisition of the stated result and could also clarify some delicate points of the first portion of the section.  

Let us then state that the development of the methodology  has been implemented for the static case and in particular we will be confined herein to the simplest gradient model with ${\xi}_{1}={\xi}_{2}=g$. Thus, the differential equation becomes
\begin{eqnarray}
(1-{{g}}^2 \Delta)\ \Delta^{\ast} \cdot u =-f \ \ \ \text{in}\ \  D \label{basicequation}.
\end{eqnarray}
This model, although simple, is very effective and indicative for the advantages and the adequacy of the gradient theory to applications (\cite{Aifantis:2005},\cite{Aifantis:1992},\cite{Aifantis:1993}). In fact, we encounter in \cite{Aifantis:1993} an efficient approach aiming at decoupling Eq.(\ref{basicequation}) in two consecutive second order equations in a natural manner as predicted by elementary differential equation calculus. However, in that approach, the differential equations are examined separately from the accompanying boundary conditions. These conditions are imposed at the final stage of the method and the point is that they do not belong to the set of conditions produced by the calculus of variations and then they do not contribute to minimization of the stored energy. In this section we will show that if someone decides to comply with the allowable set of boundary conditions provided in last section, then the standard effort to replace Eq.(\ref{basicequation}) by a two stage scheme of two second order equations leads necessarily to the emergence of intermediate boundary conditions of non-local type.   

Indeed let us consider one of the possible cases concerning the situation on the boundary $S=\partial D$ as follows: 
\begin{eqnarray}
\gamma_{0}\ u&=& 0 \ \ \ \text{on} \ \ \partial D \label{bc_correct_1} \\
\gamma_{{2}}\ u&:=& \hat{{n}}\ \hat{{n}} \ : {{\mathcal{H}}} \ \vdots \ {{\nabla}}{{\nabla}} u  = 0 \ \ \ \text{on} \ \ \partial D .\label{bc_correct_2} 
\end{eqnarray}
The symbol $\gamma$ denotes the trace operator acting on functions defined on $D$ and taking values on the trace of these functions on $\partial D$, while the subscript indicates its order \cite{lean:2000}. So, for example, for smooth functions on $\overline{D}$, $\gamma_{0}u=u{|}_{\partial D}$. Eq.(\ref{bc_correct_2}) is identical with the condition b(i) of Section \ref{sec:pereliminaries} implying $\widetilde{R} \left( \mathbf{r} \right)=0 \ $ on $\ \partial D$, as easily is deduced after using Eq.(\ref{represh}). The system (\ref{bc_correct_1},\ref{bc_correct_2}) is a homogeneous set of conditions but as it is well known no loss of generality occurs since any non homogeneous boundary stimulus could be "transfered" to the volume excitation $f$ by an appropriate transformation justified by the linearity of the problem. So we consider vanishing of displacements and double stresses on the surface of the region under investigation. Following the terminology concerning higher order boundary value problems (see for instance \cite{wloka:1987}) the boundary value operators induced by Eqs.(\ref{bc_correct_1},\ref{bc_correct_2}) constitute a normal system since the orders of the participants are different and less that $2m-1$ where $2m (=4)$ is the order of the differential equation. \newline
Gathering differential and boundary operators we form the following boundary value problem
\begin{eqnarray}
&& {{g}}^2 \Delta\ \Delta^{\ast} \cdot u_{{g}}-\Delta^{\ast} \cdot u_{{g}} = f \ \ \ \text{in}\ \  D  \nonumber \\
&& \gamma_{0}\ u_{{g}}= 0 \ \ \ \text{on} \ \ \partial D \nonumber \\
&& \gamma_{2}\ u_{{g}}= \hat{{n}}\ \hat{{n}} \ : {{\mathcal{H}}} \ \vdots \ {{\nabla} {\nabla} u_{{g}} } = 0 \ \ \ \text{on} \ \ \partial D, \label{perturbed}
\end{eqnarray}
where the displacement field has been assigned a subscript indicating the dependence on the gradient parameter $g$. The problem (\ref{perturbed}) will be nominated (perturbed) Problem I and can be considered as a perturbation of the classical elasticity (unperturbed) Problem II :
\begin{eqnarray} 
&& -\Delta^{\ast} \cdot u = f \ \ \ \text{in}\ \  D \nonumber \\
&& \gamma_{0}\ u = 0 \ \ \ \text{on} \ \ \partial D. \label{unperturbed}
\end{eqnarray}   

Furthermore, we consider the well posed auxiliary second order problem 
\begin{eqnarray}
- \Delta^{\ast} \cdot w &=& \chi \ \ \ \text{in}\ \  D \nonumber \\
 \gamma_{0}\ w&=& \phi \ \ \ \text{on} \ \ \partial D \label{classicproblem}
\end{eqnarray} 
It disposes the inverse operator
\begin{eqnarray*}
{\left(
\begin{array}{clrr}      
  - \Delta^{\ast} \\ \gamma_{0}    
\end{array} \right)}^{-1} =  \left( \begin{array}{clrr}      
  R_{0} & K_{0} \end{array} \right)
\end{eqnarray*} 
where $R_{0}$ (resp. $K_{0}$) is the operator solving the above problem for $\phi=0$ (resp. $\chi=0$). Usually $R_{0}$ is called the Green's operator (defined on $D$) while $K_{0}$ stands for Poisson operator (going from $\partial D$ to $D$). Speaking generally, if we deal with $n$ independent variables ($n>2$), then $R_{0}=R+R_{1}$, where $(R\chi) (x)= c_n {\int}_{D}  {\frac{\chi(y)}{{|x-y|}^{n-2}} dy}, \ \ x \in D \ \ $ and $R_{1}$ has singularities outside the closure of the domain $D$ \cite{lean:2000}. The Green's operator $R_{0}$ disposes a singular kernel and this singularity can be "measured" in the context of integral operators. But if we desire to face also the Poisson operator which has a similar structure but acts on a manifold or handle compositions of integral with differential operators it is necessary to embed these operators in the general framework of pseudo-differential operators. In fact following the basic terminology of \cite{Grubb:1986}, the Green's operator defines a $ps.d.o.$ (pseudo-differential operator) of order $-2$. Roughly speaking, the same is valid for the Poisson operator, although a rigorous analysis would evoke the theory of pseudo-differential operators defined on manifolds, which is out the scope of the present work.   \newline
All this stuff can be exploited in the following way: We refer to our fundamental Problem I and consider that due to the homogeneous boundary condition $\gamma_{0}\ u_{{g}} = 0$, we can write $u_{{g}}=R_{0}w$ for some $w$ (recall that $-\Delta^{\ast} \cdot R_{0}w = w$). We insert this in the first equation of (\ref{perturbed}) and put ${{s}}^{2}={{g}}^{-2}$. Then the Problem I transforms to the equivalent Problem III: 
%Reduction of the problem to a second order pseudo-differentiable B.V.P.
\begin{eqnarray}
-\Delta w +{{s}}^2 w = {f}_{{{s}}} \ \ \ \text{in}\ \  D  \nonumber \\
\gamma_{2}\ R_{0} w = 0 \ \ \ \text{on} \ \ \partial D  \label{IIIEQ}
\end{eqnarray}
with $f_{{{s}}}={{s}}^2 f$. Remark that the boundary condition of the new problem is expressed via the operator  
\begin{eqnarray}
\gamma_{2}\ R_{0} = \hat{{n}}\ \hat{{n}} \ : {{\mathcal{H}}} \ \vdots \ {\nabla} {\nabla} R_{0},
\end{eqnarray}
which is a specific [\cite{{Grubb:1986}}] pseudo-differential operator ($ps.o.p.$) constituting the composition of a differential operator $\left( \hat{{n}}\ \hat{{n}} \ : {{\mathcal{H}}} \ \vdots \ {\nabla} {\nabla} \right)$ of order $2$ with an integral operator $R_{0}$ of order $-2$. Hence this product is expected to have zero order given the general rule of pseudo-differential calculus stating that under some broad hypotheses the order of the composition of two operators is the sum of the orders. Indeed following manipulations of pseudo-differential calculus \cite{{Grubb:1986}} by working with the symbol of these operators - see the general discussion below - we decompose this boundary operator as the following superposition
\begin{eqnarray*}
\gamma_{2}\ R_{0} =\gamma_{0}+T_{0}
\end{eqnarray*}  
where $T_{0}$ is a $ps.o.p$ of order $0$ and the first term of the composition is just the simple trace operator $\gamma_{0}$. Thus the Problem III could be slightly reformulated as follows 
\begin{eqnarray}
-\Delta w +{{s}}^2 w = {f}_{{{s}}} \ \ \ \text{in}\ \  D  \nonumber \\
\gamma_{0} w + T_{0} w = 0\ \ \ \text{on} \ \ \partial D  \label{IIIEQ1}
\end{eqnarray}
and this is exactly the second order alternative of Problem I. The price to pay is the appearance of the term $T_{0} w$ in the boundary condition. The operator $T_{0}$ has zero order and so disposes easy handling but it represents a not local operator depending on the geometrical characteristics of the domain $D$.  \newline
Having substituted the original fourth-order differential Problem I by the second order pseudo-differential Problem III, we are in position to investigate the stability of the original problem as $g \rightarrow 0$ or equivalently as $s \rightarrow \infty$ and the classical problem II is obtained. The establishment of convergence and the estimation of the rate of the convergence passes through the study of the symbol of the involved operators. 

There is no space for the entire analysis to be presented here but briefly we are obliged to support our argumentation by giving the necessary scientific framework, which might also clarify some statements above. If we are referring to free space $R^{n}$, every $ps.o.p$ $P$ has the form 
\begin{eqnarray}
Pu(x)&=&{(2 \pi)}^{-n}{\int}_{R^{2n}}{e^{i(x-y) \cdot \xi}p(x,\xi) u(y) dy d \xi}, \notag \\
&& x \in R^{n} 
\end{eqnarray} 
where $p(x,\xi)$ is the symbol of the operator and we denote $P=OP(p(x,\xi))$. As an example the inverse operator ${(1-\Delta)}^{-1}$ of the modified Helmholtz operator $1-\Delta$ has the symbol function $p(x,\xi)=p(\xi)=\frac{1}{(1+{|\xi|}^{2})}$ and the degree of the denominator imposes the order $d=-2$ for this operator. We give this example to notify that for powers of linear operators with constant coefficients in free space, the pseudo-differential calculus is equivalent to Fourier transform analysis. The situation changes drastically when boundaries are present. The calculus of Boutet de Monvel \cite{Monvel:1971} is a solution to the problem of establishing a class of operators encompassing the elliptic boundary value problems as well as their solution operators (the inverses of the differential operators and the conditions). Moreover the suggested class of operators is closed under composition. (It is an "algebra"). To give a brief introduction to this stuff let us consider our region $D$ with its smooth boundary $\partial D$. We consider a local chart on the surface $\partial D$ incorporating a standard partition of unity, which is a straightforward manner to represent the manifold and to transform locally the closure $\overline{D}$ via local diffeomorphisms on the half space ${\overline{R}}^{n}_{+}=R^{n-1} \times \overline{R}_{+}$. Thus establishing the calculus of Boutet de Monvel just in ${\overline{R}}^{n}_{+}$ is not restrictive since the inverse diffeomorphism aids at transferring everything back on $\overline{D}$. The standard construction of a $ps.d.o$ on $\Omega=R^{n}_{+}$ is expected to be the "restriction" of a $ps.d.o$ $P$ on $R^{n}$ as follows: $P_{\Omega}u:= r^{+} P e^{+} u$ 
%\begin{eqnarray*}
%P_{\Omega}u:= r^{+} P e^{+} u 
%\end{eqnarray*}   
where $r^{+}$ ($e^{+}$) is the restriction (extension) by zero operator. Unfortunately this definition is not enough since generally the discontinuity of $e^{+}u$ on $x_{n}=0$ causes singularities and the operator $P_{\Omega}$ fails to map $H_{comp}^{m}(\overline{\Omega})$ into $H_{comp}^{m-d}(\overline{\Omega})$. Boutet de Monvel singled out a class of $ps.d.o.s$ where the mapping properties of $P_{\Omega}$ are nice, namely the $ps.d.o.s$ having the transmission property. This property means that if $p(x^{\prime},x_{n},{\xi}^{\prime},\xi_{n})$ is the symbol of $P$ then the one dimensional inverse Fourier transforms ${\tilde{p}}_{\alpha, \beta}(x^{\prime},x_{n},{\xi}^{\prime},z_{n})= {\mathcal{F}}^{-1}_{\xi_{n} \rightarrow z_{n}} \left( D_{x}^{\beta}D_{\xi}^{\alpha} p(x^{\prime},x_{n},{\xi}^{\prime},\xi_{n}) \right)$, 
%\begin{equation}
%{\tilde{p}}_{\alpha, \beta}(x^{\prime},x_{n},{\xi}^{\prime},z_{n})= {\mathcal{F}}^{-1}_{\xi_{n} \rightarrow z_{n}} D_{x}^{\beta}D_{\xi}^{\alpha} %p(x^{\prime},x_{n},{\xi}^{\prime},\xi_{n}) \nonumber 
%\end{eqnation}
after being restricted on  $x_{n}=0$, remain $C^{\infty}$ functions both as $z_{n} \rightarrow 0+$ and $z_{n} \rightarrow 0-$ (for all $\alpha, \ \beta \in N^{n}$).  
%{Conditions of convergence}

Keeping on working with the semi-infinite space as justified above, we are now in position to formulate the necessary arguments interrelating the perturbed and unperturbed problems in the regime of stability examination and convergence estimation. We refer to the perturbed problem I and consider the mapping produced by Fourier transforming all differentiations except the normal derivative. We obtain the transformation
\begin{eqnarray}
 \left( \begin{array}{cccc}       
  (g^2 (D_{n}^{2}+ {|{\xi}^{\prime}|}^{2})+1)\left( \mu (D_{n}^{2}+{|{\xi}^{\prime}|}^{2}) \mathcal{I}+(\lambda +\mu)\mathcal{A} \right) \\ 
  \gamma_{0} \\ \gamma_{2}
\end{array} \right) \nonumber \\
: {(H^{4}({\overline{R}}_{+}))}^{n} \rightarrow 
\left( \begin{array}{clrr}      
   {(L_{2}({R}_{+}))}^{n} \\ \times \\ C \\ \times \\ C   
\end{array} \right) \nonumber  
%\label{mapping4}
\end{eqnarray}
with $\mathcal{A}={({\xi}^{\prime},0)}^{T}({\xi}^{\prime},0)+{({\underline{0}}_{n},D_{n})}^{T}({\underline{0}}_{n},D_{n}), \ \ \mathcal{I}={\hat{x}}_{j}{\hat{x}}_{j}$. \newline
This transformation induces in a standard manner \cite{wloka:1987} the corresponding sesquilinear form which is easily proved to be bounded below. Consequently the mapping above is bijective for $\xi^{\prime} \neq 0$ and $g>0$ (and of course $s > 0$). Furthermore, composing with the boundary symbol $r_{0}(x,{\xi}^{\prime},D_{n})$ of $R_{0}$, we can show that the boundary symbol operator of the Problem III satisfies the main assumptions I and II of Definition 1.5.5 of \cite{Grubb:1986}, which definition states the necessary conditions for the validity of the parameter-ellipticity . In addition the assumption III of the same definition is valid since the limit for  ${\xi}^{\prime} \rightarrow 0$ of the strictly homogeneous boundary symbol operator for (\ref{IIIEQ}) is 
%shares ellipticity in the whole range $g>0$. Even for ${\xi}^{\prime} \rightarrow 0$, the perturbed problem II obtains the boundary symbol operator 
\begin{eqnarray*}
\left( \begin{array}{cccc}       
  D^{2}_{n}+s^2 \\ 
  \gamma_{0} 
\end{array} \right),
\end{eqnarray*}
which is invertible for $s>0$. Apart the parameter ellipticity is established, the theorems of Section 3.3 of \cite{Grubb:1986} are applied to (\ref{IIIEQ}). Thus we obtain unique solvability for $s \geq s_{0}$ (for some $s_{0} > 0$) and $s$-dependent estimates of the solution $w$ in terms of $f_{s}$, that can be used to give $g$-estimates of $u_{g}$ or $u_{g}-u$ in terms of the excitation $f$, with precise information about the $g$-dependence.  

%{Convergence Analysis}
%The previous arguments assure unique solvability for $s \geq s_{0}$ (for some $s_{0} > 0$) of the perturbed problem II with $s$-dependent estimates of %the solution $w$ with respect to $f_{s}$. This can be used to obtain estimates of $u_{g}$ or $u_{g}-u$ with respect to the excitation $f$ with %specific information on the ${g}$-dependence. 
The implementation of this convergence rate concerns the general inversion problem (with possibly not homogeneous boundary conditions)  
\begin{eqnarray*}
{\left( \begin{array}{cccc}       
  {{g}}^2 \Delta\ \Delta^{\ast} \cdot u_{{g}}-\Delta^{\ast} \cdot u_{{g}} \\ 
  \gamma_{0} \\ \gamma_{2}
\end{array} \right)}^{-1}=(M_{g},N_{g,0},N_{g,2})
\end{eqnarray*}
where $M_{g}=R_{0}+{M_{g}^{\prime}}$ ($N_{g,0}=K_{0}+{N_{g,0}^{\prime}}$) is a perturbation of Green's operator $R_{0}$ (Poisson operator $K_{0}$). The quantification of the arguments above is accomplished in a tedious but straightforward manner by applying, as stated before, the basic theorem of Section 3 of \cite{Grubb:1986} to the operators of gradient elasticity exposed above. We give so the final outcome of this analysis quantifying the convergence analysis 
%{The convergence result}
\newtheorem{env_name}{Theorem}
\begin{env_name}
The problem 
\begin{eqnarray*}
&&{{g}}^2 \Delta\ \Delta^{\ast} \cdot u_{{g}}-\Delta^{\ast} \cdot u_{{g}} = f \ \ \ \text{in}\ \  D  \\
&& \gamma_{0}\ u_{{g}}= \phi_{0} \ \ \ \text{on} \ \ \partial D \\
&& \gamma_{2}\ u_{{g}}= \hat{{n}}\ \hat{{n}} \ : {{\mathcal{H}}} \ \vdots \ {{\nabla} {\nabla} u_{{g}} } = \phi_{2} \ \ \ \text{on} \ \ \partial D 
\end{eqnarray*}
has a unique solution in closeness with the solution $u=R_{0}f+K_{0}\phi_{0}$ of the unperturbed problem in the sense that $u_{{g}}=u+{M_{g}^{\prime}} f+{N_{g,0}^{\prime}} \phi_{0} + {N_{g,2}^{\prime}} \phi_{2}$ with 
\begin{eqnarray*}
{\|{M_{g}^{\prime}} f\|}_{t} \leq c  {{g}}^{3-t+l}{\| f\|}_{1+l}+c_{\delta} {{g}}^{\frac{5}{2}-t}{\| f\|}_{\frac{1}{2}+\delta}, \\  1 \leq t \leq 3+l, \ \ l \geq 0,\ \ \delta>0. \\
{\|{N_{g,0}^{\prime}} \phi_{0} \|}_{t} \leq c  \left( {{g}}^{3-t+l}{\| \phi_{0}\|}_{\frac{3}{2}+l}+{{g}}^{\frac{5}{2}-t}{\| \phi_{0}\|}_{2}\right), \\ 1 \leq t \leq 3+l, \ \ l \geq 0 \\
{\|{N_{g,2}^{\prime}} \phi_{2} \|}_{t} \leq c  \left( {{g}}^{\frac{5}{2}-t}{\| \phi_{2}\|}_{0}+{{g}}^{\frac{5}{2}-t+l}{\| \phi_{2}\|}_{l}\right), \\ 1 \leq t \leq \frac{5}{2}+l, \ \ l \geq 0
\end{eqnarray*}
\end{env_name}
Every norm of the type ${\| \cdot \|}_{t}$ appeared in the Theorem refers to the Sobolev space $H^{t}$ on the appropriate domain. As a clarifying example, if we are interesting in estimating the regularity of the response of the homogeneous problem ($\phi_{0}=\phi_{2}=0$) to the force $f \in H^{2}(D)$, the first estimate of the theorem provides the bound ${\|u_{g}-u \|}_{2} \leq C g^{\frac{1}{2}} {\|f \|}_{2}$.
%\end{env_name_2}

\section{The purely differential approach} 
\label{alter}
In this section we present a different approach leading to a second order purely differential scheme referring though to an increased number of independent variables. More precisely, we consider as unknowns of the problem not only the displacement field $u$ but also the gradient of the strain tensor $\widetilde{\nu}:= \nabla \widetilde{e}$, which is a triadic symmetric in the two last indices. The variables $u,\widetilde{\nu}$ are considered initially independent and their connection will be built from the beginning. First we reformulate the primitive Eq.(\ref{basceq}), which on the basis of the constitutive law (\ref{represh}) becomes    
\begin{eqnarray}
\Delta^{\ast} \cdot u -\nabla \nabla\ : \mathcal{H} \ \vdots \ \widetilde{\nu} &=& -f. \label{basceq1}
\end{eqnarray}
Moreover the variables $u, \widetilde{\nu}$ should be forced to satisfy 
\begin{eqnarray}
\frac{1}{2} \nabla \left( \nabla u + {(\nabla u)}^{T} \right) -\widetilde{\nu} =0. \label{basceq2}
\end{eqnarray}
We would like to assemble the last two equations to form a differential system of second order sharing useful properties of the corresponding calculus.

First, we introduce the differential tensor of second order and fourth degree $\mathcal{D}:=\frac{1}{2}\left( I \nabla \nabla + {(I \nabla \nabla)}^{1324} \right)=\frac{1}{2}\left( I \nabla \nabla + ({\hat{x}}_{i} \nabla {\hat{x}}_{i} \nabla) \right)$ and remark that 
\begin{eqnarray*}
{\mathcal{D}}^{T} \cdot u:={\mathcal{D}}^{4321} \cdot u = \nabla \widetilde{e} = \widetilde{\nu},
\end{eqnarray*}
which is another way to state Eq.(\ref{basceq2}). 
We transform then Eqs.(\ref{basceq1},\ref{basceq2}) into the following two alternative systems 
\begin{eqnarray}
 \left( \begin{array}{cccc}
 - \Delta^{\ast} \ \cdot &  \mathcal{D} \ \vdots \ \mathcal{H} \ \vdots \\
 
  \pm \ {\mathcal{H}} \ \vdots \ {\mathcal{D}}^{T} \cdot & \mp \ {\mathcal{H}} \ \vdots
  \end{array} \right) 
   \left( \begin{array}{cccc}
   u \\ \widetilde{\nu}
   \end{array} \right)=\left( \begin{array}{cccc}
   f \\ 0
   \end{array} \right) \label{forms}
\end{eqnarray} 
The operators 
\begin{eqnarray}
 A_{\pm} := \left( \begin{array}{cccc}
 - \Delta^{\ast} \ \cdot &  \mathcal{D} \ \vdots \ \mathcal{H} \ \vdots \\ 
  \pm \ {\mathcal{H}} \ \vdots \ {\mathcal{D}}^{T} \cdot & \mp \ {\mathcal{H}} \ \vdots
  \end{array} \right) \nonumber \\ 
  : D(A_{\pm}) \subset {(H^{1}(D))}^{3} \times {(H_{sym}^{1}(D))}^{27} \nonumber \\ 
  \rightarrow {(L^{2}(D))}^{3} \times {(L_{sym}^{2}(D))}^{27}
\end{eqnarray} 
are induced explicitly from the formulations (\ref{forms}), dispose domains $D(A_{\pm})$ (to be specified later on) that are subsets of the appeared Sobolev spaces. The subscript $sym$ denotes that from all possible triadics belonging to ${(H^{1}(D))}^{27}$ (or ${(L^{2}(D))}^{27}$), we refer to those that are symmetric in the last two indices.   

Both systems (\ref{forms}) represent static gradient elasticity via a polyadic differential operator of order two. The physical meaning of this reduction is that the constitutive equations of the problem have been absorbed by the differential system itself, have become a part of the PDE's governing the motion of the region with microstructure and thus the systems (\ref{forms}) do not need any additional external connection between the involved variables. What remains to be implemented is the set of boundary conditions of the problem, which of course must comply again with the calculus of variations as presented in Section \ref{sec:pereliminaries}. The boundary conditions will be responsible for the determination of the domains $D(A_{\pm})$ as usually happens in the functional theoretic setting of the BVP's. 

Before proceeding to the characterizations of the domains of the operators under discussion, we would like to distinguish the two operators $A_{\pm}$ in the following heuristic manner: Due to the symmetry of the constitutive polyadic $\mathcal{H}$ the operator $A_{+}$ is formally symmetric while $A_{-}$ can be decomposed as the sum of a symmetric and an antisymmetric component. So just at first sight the operator $A_{+}$ might be considered as more convenient than $A_{-}$ since the symmetric operators have generally useful properties. However this is not the case as will be proved in the sequel and if someone wanted to insist in symmetric realizations, then a drastic change of $A_{+}$ would be necessary.  

To build the functional theoretic establishment of the current section, we aim at connecting the operators $A_{\pm}$ with the sesquilinear form induced by the calculus of variations. We start with two pairs $(u_{i},{\widetilde{\nu}}_{i}),\ \ i=1,2$ and form the usual $L^{2}-$ inner product 
\begin{eqnarray}    
&&\left\langle (u_{1},{\widetilde{\nu}}_{1}),\ A_{\pm} \ (\begin{array}{cccc} u_{2} \\ {\widetilde{\nu}}_{2} \end{array}) \right\rangle := -\int_{D} \overline{u}_{1} \cdot \Delta^{\ast} \cdot {u}_{2} +  \nonumber \\
&&\int_{D} \overline{u}_{1} \cdot  \mathcal{D} \ \vdots \ \mathcal{H} \ \vdots \  {\widetilde{\nu}}_{2} \pm \int_{D} {\overline{\widetilde{\nu}}}_{1}^{321} \ \vdots \ \mathcal{H} \ \vdots \ {\mathcal{D}}^{T} \cdot u_{2} \nonumber \\
&&\mp \int_{D} {\overline{\widetilde{\nu}}}_{1}^{321} \ \vdots \ \mathcal{H} \ \vdots \ {\widetilde{\nu}}_{2} \label{inner}
\end{eqnarray}
One integration by parts to any one except the final integral participating in Eq.(\ref{inner}), the introduction of the auxiliary fields ${\widetilde{\mu}}_{i}=\mathcal{H} \ \vdots \ {\widetilde{\nu}}_{i},\ i=1,2$ and some straightforward manipulations transform this equation to the next form  
\begin{eqnarray}    
&&\left\langle (u_{1},{\widetilde{\nu}}_{1}),\ A_{\pm} \ (\begin{array}{cccc} u_{2} \\ {\widetilde{\nu}}_{2} \end{array}) \right\rangle =W_{el}(u_{1},u_{2}) \nonumber \\
&&-\int_{D} (\nabla \cdot {\mu}_{2})\ : \ \nabla {\overline{u}}_{1} \mp \int_{D} (\nabla \cdot {\overline{\widetilde{\mu}}}_{1})\ : \ \nabla {{u}}_{2}  \nonumber \\ 
&&- \int_{S} {\overline{u}}_{1} \cdot {{P}}_{2} - \int_{S} {\overline{u}}_{1} \cdot \left\{ [\nabla_{S}\hat{n}-(\nabla_{S} \cdot \hat{n})\hat{n}\hat{n}]: {{\widetilde{\mu}}}_{2} \right. \nonumber  \\
&& \left. +\hat{n}{\nabla}_{S}: {{\widetilde{\mu}}}_{2}^{213} \right\}  \mp \int_{S} {{u}}_{2} \cdot \left\{ [\nabla_{S}\hat{n}-(\nabla_{S} \cdot \hat{n})\hat{n}\hat{n}]: {\overline{\widetilde{\mu}}}_{1} \right. \nonumber  \\
&& \left. +\hat{n}{\nabla}_{S}: {\overline{\widetilde{\mu}}}_{1}^{213} \right\} \pm \int_{S} \frac{\partial u_{2}}{\partial n} \cdot (\hat{n}\hat{n} : {\overline{\widetilde{\mu}}}_{1}) \nonumber \\
&& \mp \int_{D} {\overline{\widetilde{\nu}}}_{1}^{321} \ \vdots \ \mathcal{H} \ \vdots \ {\widetilde{\nu}}_{2},  \label{sequil}
\end{eqnarray}
where ${{P}}_{i},\ i=1,2$ is the static surface traction -see relation (\ref{tractionjune})-pertaining to the pair $(u_{i}, {\widetilde{\nu}}_{i})$ and $W_{el}$ is the well known sesquilinear form of classical elasticity \cite{lean:2000}
\begin{eqnarray}
W_{el}(u_{1},u_{2})=\int_{D} [2 \mu e_{jk}({\overline{u}}_{1})e_{jk}({{u}}_{2})+\lambda \text{div}{\overline{u}}_{1} \text{div}{{u}}_{2}]
\end{eqnarray}
Performing one more integration by parts to the integrals appearing in the right hand side of Eq.(\ref{sequil}), we obtain 
\begin{eqnarray}
&&\left\langle (u_{1},{\widetilde{\nu}}_{1}),\ A_{\pm} \ (\begin{array}{cccc} u_{2} \\ {\widetilde{\nu}}_{2} \end{array}) \right\rangle=\left\langle \ A_{\pm}^{\ast} \ (\begin{array}{cccc} u_{1} \\ {\widetilde{\nu}}_{1} \end{array}), (u_{2},{\widetilde{\nu}}_{2}) \right\rangle \nonumber \\
&&- \int_{S} {\overline{u}}_{1} \cdot {{P}}_{2} \pm \int_{S} {{u}}_{2} \cdot {\overline{{P}}}_{1} - \int_{S} \frac{\partial {\overline{u}}_{1}}{\partial n} \cdot (\hat{n}\hat{n} : {{\widetilde{\mu}}}_{2}) \nonumber \\
&&\pm \int_{S} \frac{\partial u_{2}}{\partial n} \cdot (\hat{n}\hat{n} : {\overline{\widetilde{\mu}}}_{1})-(1 \pm 1) \int_{D} (T^{\ast} \cdot {\overline{u}}_{1}) \cdot u_{2} \nonumber \\
&&\mp \int_{D} {\overline{\widetilde{\nu}}}_{1}^{321} \ \vdots \ \mathcal{H} \ \vdots \ {\widetilde{\nu}}_{2}, \label{adjointjune}
\end{eqnarray}
where we encounter the Cauchy stress tensor $T^{\ast}$ and the formal adjoint operator $A_{\pm}^{\ast}$. All the surface integrals appearing in the relation above pertain to traces of the involved fields on the surface $S=\partial D$. We remark that two possible sets of boundary conditions arise that annihilate these surface integrals. The first choice is 
\begin{eqnarray}
&&D({A}_{\pm}^{(1)})=D({({A}_{\pm}^{(1)})}^{\ast})=\left\{ (u,\widetilde{\nu}) \in \mathcal{L}:={(H^{1}(D))}^{3} \times \right. \nonumber \\ 
&&\left. {(H_{sym}^{1}(D))}^{27}:  u{|}_{S}=0 \ \wedge \ \hat{n}\hat{n}: \mathcal{H}\ \vdots \ {\widetilde{\nu}} {|}_{S}=0 \right\}   \label{mpros}
\end{eqnarray}
Clearly ${A}_{\pm}^{(1)}$ is densely defined in $\mathcal{L}$. Thus, in particular ${A}_{+}^{(1)}$ is a self-adjoint operator while ${A}_{-}^{(1)}$ is not due to the fact that ${A}_{-}^{(1)}$ and ${({A}_{-}^{(1)})}^{\ast}$ are different differential operators. Let us recall here that the boundary conditions defining the structure of ${A}_{\pm}^{(1)}$ are exactly the conditions (a1-b1) of Section \ref{sec:pereliminaries}.  To discuss the second choice of boundary conditions, let us remark that when $u{|}_{S} \neq 0$ then the last term of Eq.(\ref{adjointjune}) can not be compensated in the case of $A_{+}$ unless the unphysical - in the realm of gradient elasticity - condition $T^{\ast} \cdot {\overline{u}}_{1}=0$ holds. On the contrary $A_{-}$ is well behaving and admits the following alternative set of conditions
\begin{eqnarray}
&&D({A}_{-}^{(2)})=D({({A}_{-}^{(2)})}^{\ast})=\left\{ (u,\widetilde{\nu}) \in \mathcal{L}:={(H^{1}(D))}^{3} \times \right. \nonumber \\ 
&&\left. {(H_{sym}^{1}(D))}^{27}:  {P}{|}_{S}=0 \ \wedge \ \frac{\partial u}{\partial n} {|}_{S}=0 \right\}   \label{pisw}
\end{eqnarray}
Then the boundary conditions (a2-b2) of Section \ref{sec:pereliminaries} have been emerged naturally via the second order formulation and define the operator $A_{-}^{(2)}$, which although is densely defined and satisfies $D({A}_{-}^{(2)})=D({({A}_{-}^{(2)})}^{\ast})$, is not self adjoint as explained before. 
%\begin{eqnarray}
% {(A_{-}^{(2)})}^{\ast} := \left( \begin{array}{cccc}
% - \Delta^{\ast} \ \cdot &  - \mathcal{D} \ \vdots \ \mathcal{H} \ \vdots \\ 
%  + \ {\mathcal{H}} \ \vdots \ {\mathcal{D}}^{T} \cdot &  + \ {\mathcal{H}} \ \vdots
%  \end{array} \right) \neq  A_{-}^{(2)}
%\end{eqnarray}  
The regularity of the solutions imposed a priori by Eqs.(\ref{mpros},\ref{pisw}) is the appropriate once since it stems as usually from the sesquilinear form which is interrelated with the second order differential operator. In our case we refer to Eq.(\ref{sequil}) - examining from now on only the (-) case - and after imposing boundary conditions (of any type) we nominate the right hand side as the sesquiliear form
\begin{eqnarray}
&&\Phi\left((u_{1},{\widetilde{\nu}}_{1}),(u_{2},{\widetilde{\nu}}_{2})\right):= W_{el}(u_{1},u_{2}) + \int_{D} {\overline{\widetilde{\nu}}}_{1}^{321} \ \vdots \ \mathcal{H} \ \vdots \ {\widetilde{\nu}}_{2}\nonumber \\
&&-\int_{D} (\nabla \cdot {\mu}_{2})\ : \ \nabla {\overline{u}}_{1} + \int_{D} (\nabla \cdot {\overline{\widetilde{\mu}}}_{1})\ : \ \nabla {{u}}_{2}  \nonumber \\ 
&& - \int_{S} {\overline{u}}_{1} \cdot \left\{ [\nabla_{S}\hat{n}-(\nabla_{S} \cdot \hat{n})\hat{n}\hat{n}]: {{\widetilde{\mu}}}_{2} +\hat{n}{\nabla}_{S}: {{\widetilde{\mu}}}_{2}^{213} \right\} \nonumber  \\
&& + \int_{S} {{u}}_{2} \cdot \left\{ [\nabla_{S}\hat{n}-(\nabla_{S} \cdot \hat{n})\hat{n}\hat{n}]: {\overline{\widetilde{\mu}}}_{1} +\hat{n}{\nabla}_{S}: {\overline{\widetilde{\mu}}}_{1}^{213} \right\}  \nonumber
\end{eqnarray}    
and so the operators $A_{-}^{(l)},\ l=1,2$ are rigorously defined via the relation
\begin{eqnarray}    
&&\left\langle (u_{1},{\widetilde{\nu}}_{1}),\ A_{-}^{(l)} \ (\begin{array}{cccc} u_{2} \\ {\widetilde{\nu}}_{2} \end{array}) \right\rangle =\Phi\left((u_{1},{\widetilde{\nu}}_{1}),(u_{2},{\widetilde{\nu}}_{2})\right), \nonumber \\
&&\forall (u_{j},{\widetilde{\nu}}_{j}) \in D(A_{-}^{(l)}), \ \ j,l=1,2
\end{eqnarray}
The sesquilinear form $\Phi$ merits some very interesting properties. First, it is bounded in the sense that 
\begin{eqnarray}
|\Phi\left((u,{\widetilde{\nu}}),(v,{\widetilde{w}})\right)| \leq C \| (u,{\widetilde{\nu}}){\|}_{{}_{1}}\| (v,{\widetilde{w}}){\|}_{{}_{1}},
\end{eqnarray}
where $\| (u,{\widetilde{\nu}}){\|}_{{}_{1}}^{2}=\|u{\|}_{H^{1}(D)}^{2}+\|\widetilde{\nu}{\|}_{H^{1}(D)}^{2}$.
The second property is crucial and induces some kind of lower boundedness for the sesquilinear form. Indeed on the basis of Eq.(\ref{ante}) and the coercivity of the classical elastic sesquilinear form \cite{lean:2000}, it holds that
\begin{eqnarray}
&&|\Phi\left((u,{\widetilde{\nu}}),(u,{\widetilde{\nu}})\right)| \geq \Re[\Phi\left((u,{\widetilde{\nu}}),(u,{\widetilde{\nu}})\right)] \nonumber \\
&&= W_{el}(u,u)+ \int_{D} {\overline{\widetilde{\nu}}}^{321} \ \vdots \ \mathcal{H} \ \vdots \ {\widetilde{\nu}} \nonumber \\
&&\geq c(\|u{\|}_{H^{1}(D)}^{2}+\|\widetilde{\nu}{\|}_{L^{2}(D)}^{2}) - C \|u{\|}_{L^{2}(D)}^{2} \label{ouf}
\end{eqnarray}  
for some positive constant $c, C$ dependent on the gradient parameters and the Lam$\acute{\text{e}}$'s constants only. In particular when we are dealing with the first kind of boundary conditions, i.e. $l=1$, involving the vanishing of $u$ on $S$, then the classical elasticity sesquilinear form $W_{el}$ is bounded below \cite{lean:2000} and then the positiveness of $\Phi$ emerges naturally:
\begin{eqnarray}
&&|\Phi\left((u,{\widetilde{\nu}}),(u,{\widetilde{\nu}})\right)| \geq \Re[\Phi\left((u,{\widetilde{\nu}}),(u,{\widetilde{\nu}})\right)] \nonumber \\
&&= W_{el}(u,u)+ \int_{D} {\overline{\widetilde{\nu}}}^{321} \ \vdots \ \mathcal{H} \ \vdots \ {\widetilde{\nu}} \nonumber \\
&&\geq c(\|u{\|}_{H^{1}(D)}^{2}+\|\widetilde{\nu}{\|}_{L^{2}(D)}^{2}), \ \  \forall (u,{\widetilde{\nu}}) \in D(A_{-}^{(1)}). \nonumber \\
&& \ 
\end{eqnarray}  
We are now in position to state the main outcome of the present section.
\newtheorem{sic}[env_name]{Theorem}
\begin{sic}
The system of second order differential equations  
\begin{eqnarray}
 \left( \begin{array}{cccc}
 - \Delta^{\ast} \ \cdot &  \mathcal{D} \ \vdots \ \mathcal{H} \ \vdots \\
 
  - \ {\mathcal{H}} \ \vdots \ {\mathcal{D}}^{T} \cdot &  \ {\mathcal{H}} \ \vdots
  \end{array} \right) 
   \left( \begin{array}{cccc}
   u \\ \widetilde{\nu}
   \end{array} \right)=\left( \begin{array}{cccc}
   f \\ 0,
   \end{array} \right),\ \text{in}\ D \label{forms2} \label{newnew}
\end{eqnarray} 
in conjunction with the boundary conditions 
\begin{eqnarray}
&&u{|}_{S}=0 \ \wedge \ \hat{n}\hat{n}: \mathcal{H}\ \vdots \ {\widetilde{\nu}} {|}_{S}=0  \label{newpas}
\end{eqnarray}
disposes exactly one solution $(u,\widetilde{\nu}) \in {(H^{1}(D))}^{3} \times {(H_{sym}^{1}(D))}^{27}$.  This solution is equal to $(u,\nabla \widetilde{e})=(u,\frac{1}{2}\nabla (\nabla u+{(\nabla u)}^{T}))$ where $u$
satisfies the original fourth order differential equation of gradient elasticity
\begin{eqnarray}
\Delta^{\ast} \ \cdot \ u - \nabla \nabla \ : \ \mathcal{H}\ \vdots \  \nabla \nabla \ u =-f,\ \ \text{in}\  D \label{orig}
\end{eqnarray}
with the boundary conditions
\begin{eqnarray}
&&u{|}_{S}=0 \ \wedge \ \hat{n}\hat{n}: \mathcal{H}\ \vdots \ \nabla \nabla u {|}_{S}=0.  
\end{eqnarray}
The consideration of the second set of boundary conditions ($ {P}{|}_{S}=0 \ \wedge \ \frac{\partial u}{\partial n} {|}_{S}=0 $) obeys to the rules of the Fredholm alternative.
\end{sic}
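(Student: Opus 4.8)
The plan is to prove that the augmented second-order system (\ref{newnew})--(\ref{newpas}) is \emph{equivalent} to the classical fourth-order gradient-elasticity problem (\ref{orig}), whose well-posedness is standard, and then to transport existence, uniqueness and the claimed regularity back across this equivalence. The pivot of the reduction is the pointwise positivity (\ref{ante}) of the sixth-order constitutive polyadic $\mathcal{H}$. Indeed, for any $(u,\widetilde{\nu})\in D(A_{-}^{(1)})$ solving (\ref{newnew}), the second row of the system reads $\mathcal{H}\ \vdots\ (\widetilde{\nu}-{\mathcal{D}}^{T}\cdot u)=0$ almost everywhere in $D$, where ${\mathcal{D}}^{T}\cdot u=\nabla\widetilde{e}$. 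The triadic $\psi:=\widetilde{\nu}-\nabla\widetilde{e}$ is symmetric in its last two indices, so (\ref{ante}), read as the algebraic inequality $\psi^{321}\ \vdots\ \mathcal{H}\ \vdots\ \psi\ge ca\,|\psi|^{2}$ valid at every point, forces $\psi=0$, i.e. $\widetilde{\nu}=\nabla\widetilde{e}$. Substituting this into the first row collapses it exactly onto (\ref{orig}), while the symmetry of $\mathcal{H}$ turns the non-classical condition $\hat{n}\hat{n}:\mathcal{H}\ \vdots\ \widetilde{\nu}|_{S}=0$ into $\hat{n}\hat{n}:\mathcal{H}\ \vdots\ \nabla\nabla u|_{S}=0$. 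Conversely, if $u$ solves (\ref{orig}) then the pair $(u,\nabla\widetilde{e})$ solves (\ref{newnew})--(\ref{newpas}); hence the two problems share the same solvability, with solutions linked by $\widetilde{\nu}=\nabla\widetilde{e}$.

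Next I would settle the fourth-order problem by the direct method. Its weak formulation on $\mathcal{U}:=\{u\in(H^{2}(D))^{3}:\ u|_{S}=0\}$ (with $u|_{S}=0$ essential and $\hat{n}\hat{n}:\mathcal{H}\ \vdots\ \nabla\nabla u|_{S}=0$ appearing as a natural condition) carries the bilinear form $a(u,v)=W_{el}(v,u)+\int_{D}(\nabla\nabla\overline{v})^{321}\ \vdots\ \mathcal{H}\ \vdots\ \nabla\nabla u$. It is bounded on $\mathcal{U}$, and coercive because (\ref{ante}) controls $\|\nabla\nabla u\|_{L^{2}}^{2}$ while the coercivity of $W_{el}$ together with Poincar\'e's inequality controls $\|u\|_{H^{1}}^{2}$; Lax--Milgram then yields a unique $u\in\mathcal{U}$. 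To land the pair in the space named by the theorem I would invoke the elliptic regularity of the strongly elliptic fourth-order operator (smooth $\partial D$, $f\in L^{2}$), upgrading $u$ to $(H^{3}(D))^{3}$ (indeed $H^{4}$), so that $\widetilde{\nu}=\nabla\widetilde{e}\in(H_{sym}^{1}(D))^{27}$ and $(u,\widetilde{\nu})\in D(A_{-}^{(1)})$. Uniqueness I would read off independently from the lower bound (\ref{ouf}): for $l=1$ the form $\Phi$ is strictly positive on $D(A_{-}^{(1)})$, so $A_{-}^{(1)}(u,\widetilde{\nu})=0$ gives $\Phi((u,\widetilde{\nu}),(u,\widetilde{\nu}))=0$ and hence $(u,\widetilde{\nu})=0$.

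For the alternative conditions $P|_{S}=0\ \wedge\ \partial u/\partial n|_{S}=0$ defining $A_{-}^{(2)}$, the argument branches at the coercivity step: under traction-type conditions $W_{el}$ loses coercivity, which is precisely the source of the residual term $-C\|u\|_{L^{2}}^{2}$ in (\ref{ouf}), so only a G\aa rding inequality survives. Combining it with the compact embedding $H^{1}\hookrightarrow L^{2}$, I would cast the reduced problem as a compact perturbation of an invertible one; solvability then holds if and only if $f$ is orthogonal to the finite-dimensional null space of the associated operator. This is exactly the statement that the second case ``obeys the rules of the Fredholm alternative.''

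The step I expect to be the main obstacle is the norm mismatch already visible in (\ref{ouf}): the form $\Phi$ is bounded only in the full $H^{1}\times H^{1}$ energy norm yet coercive only in the weaker $H^{1}\times L^{2}$ norm, because the gradient of $\widetilde{\nu}$ enters solely through the antisymmetric cross terms that disappear from the real part. This genuine degeneracy blocks a one-shot Lax--Milgram argument applied directly to the system and is exactly what forces the detour through the equivalent fourth-order problem and its elliptic regularity, where the $H^{1}$-regularity of $\widetilde{\nu}$ is actually recovered. A secondary technical point, essentially carried out in passing from (\ref{sequil}) to (\ref{adjointjune}), is the component-by-component verification that the natural boundary term generated by the two integrations by parts reproduces precisely the classical and non-classical conditions of Section \ref{sec:pereliminaries}.
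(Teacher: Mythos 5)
Your proposal is correct, and its load-bearing steps coincide with the paper's own proof: existence is obtained by solving the fourth-order problem (\ref{orig}) under the first set of boundary conditions (coercivity (\ref{ante}) plus standard variational theory), upgrading by elliptic regularity so that $(u,\nabla\widetilde{e})$ lands in ${(H^{1}(D))}^{3}\times{(H_{sym}^{1}(D))}^{27}$ and verifiably solves (\ref{newnew})--(\ref{newpas}); uniqueness comes from the positivity of $\Phi$ on $D(A_{-}^{(1)})$; the $l=2$ case is relegated to the Fredholm alternative via (\ref{ouf}). Where you genuinely depart from the paper is the explicit forward reduction: you read the second row of (\ref{newnew}) as $\mathcal{H}\ \vdots\ (\widetilde{\nu}-\nabla\widetilde{e})=0$ and use positivity of $\mathcal{H}$ to force $\widetilde{\nu}=\nabla\widetilde{e}$, so that \emph{every} solution of the augmented system solves (\ref{orig}); the paper never argues this way, obtaining the identification only indirectly (exhibit one solution, then invoke uniqueness through $\Phi$). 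Your route buys a structural insight --- the constitutive constraint is enforced algebraically by the second row, independently of the boundary conditions --- but needs two small repairs: (i) inequality (\ref{ante}) is stated for strain gradients, so to apply it to $\psi=\widetilde{\nu}-\nabla\widetilde{e}$ you should note that every triadic symmetric in its last two indices is, pointwise, a strain gradient (the map $S_{ijk}\mapsto\tfrac{1}{2}(S_{ijk}+S_{ikj})$ from first-pair-symmetric to last-pair-symmetric triadics is a bijection of $18$-dimensional spaces); (ii) for $u\in{(H^{1}(D))}^{3}$ the term ${\mathcal{D}}^{T}\cdot u$ is only a distribution, so the second row does not hold ``almost everywhere'' and the quadratic form $\psi^{321}\ \vdots\ \mathcal{H}\ \vdots\ \psi$ cannot be formed pointwise; use instead that positivity makes the constant-coefficient map $\psi\mapsto\mathcal{H}\ \vdots\ \psi$ injective on that space, whence $\widetilde{\nu}=\nabla\widetilde{e}$ first distributionally and then in $H^{1}$. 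Your closing remark --- that $\Phi$ is bounded in $H^{1}\times H^{1}$ but coercive only in $H^{1}\times L^{2}$, blocking a one-shot Lax--Milgram argument on the system --- is precisely the unstated reason the paper's proof is organized through the fourth-order problem as well.
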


\begin{proof}
The differential equation (\ref{orig}) in accordance with the first of the permitted sets of boundary conditions ($l=1$), primarily thanks to coercivity (\ref{ante}), accepts \cite{wloka:1987} a unique solution $u$, which due to regularity of elliptic problems belongs actually to ${(H^{4}(D))}^{3}$. Then the tensor $\widetilde{w}=\frac{1}{2}\nabla(\nabla u+{(\nabla u)}^{T})$ belongs to ${(H_{sym}^{2}(D))}^{27} \subset {(H_{sym}^{1}(D))}^{27}$ and it is a  straightforward matter to prove that the pair $(u,\widetilde{w})$ satisfies the differential system (\ref{newnew}) with the accompanying boundary conditions. On the other hand this is the unique solution of the second order boundary value problem. Indeed if we supposed that there existed a second solution then the difference of the two solutions $(z,\widetilde{\chi})$ would satisfy the homogeneous version of the B.V.P (\ref{newnew}-\ref{newpas}). Then
\begin{eqnarray}
&&0 = \left\langle (z,{\widetilde{\chi}}),\ A_{-}^{(1)} \ (\begin{array}{cccc} z \\ {\widetilde{\chi}} \end{array}) \right\rangle =\Phi\left((z,{\widetilde{\chi}}),(z,{\widetilde{\chi}})\right), \nonumber \\
&& \geq c(\|z{\|}_{H^{1}(D)}^{2}+\|\widetilde{\chi}{\|}_{L^{2}(D)}^{2}) 
\end{eqnarray}
from where we deduce that $z=0,\ \widetilde{\chi}=\widetilde{0}$, which assures uniqueness. The same arguments can be repeated for non homogeneous boundary conditions via the simple concept of transforming the problem by incorporating the boundary conditions in the forcing term (on the basis of  Duhamel's principle).
For the second set of boundary conditions ($l=2$) the situation is usually more intrinsic. The homogeneous problem (with $f=0$) admits no trivial solutions. These solutions coincide with the rigid motions \cite{lean:2000}. These solutions can be excluded in the framework of deformable media theory and this can be accomplished in functional theoretical terms as suggested in \cite{acharalas:2002}. Then uniqueness is restored and the unique solvability of the inhomogeneous problem passes through the application of the Fredholm alternative, which is involved via the evocation of coerciveness (\ref{ouf}) \cite{wloka:1987}.  
\end{proof}
   
\section*{Conclusions}
Two alternative second order formulations for boundary value problems of gradient elasticity are presented in the current work. The first attempt leaded to the formulation of a two stage algorithm characterized mainly by the fact that one boundary condition has non local behavior. The implication of the pseudo-differential calculus is the appropriate methodology to establish stability and convergence analysis as gradient elasticity - considered as a perturbed state - gives gradually place to the classical elastic case. The second approach consists in the reformulation of the gradient elasticity problem via a second order differential system of several independent variables so that the constitutive equations of gradient elasticity have become part of the differential system itself. The settlement of appropriate operators sharing efficient coerciveness properties lead to verification of equivalence of the original fourth order problem with the suggested new system and gave the base for assuring uniqueness of the solution and solvability governed by the rules of the Fredholm alternative theory.      
% If you have acknowledgments, this puts in the proper section head.
%\section*{Acknowledgment}
%\begin{acknowledgments}
% put your acknowledgments here.
%Dedicated to the Centennial of Academician {\textbf{Ilia N. Vekua}}.
%Computations have been performed mainly in the Laboratory of
%Mathematical Modeling and Scientific Computing of the Materials
%Science Department using a Quad AMD Opteron. 
%Additional computer resourses
%have been provided by the 
%Research Center for Scientific Simulations (RCSS) 
%of the University of Ioannina. 
%For the 3D graphs we
%have used the visualization software {\it{OPSIS}} created by
%the first author.
%\end{acknowledgments}
%------------------------------------------------------
%------------------------------------------------------
\bibliographystyle{plain}
\normalem
\bibliography{gradient_new}
\end{document}